\newcommand{\vertiii}[1]{{\left\vert\kern-0.25ex\left\vert\kern-0.25ex\left\vert #1
    \right\vert\kern-0.25ex\right\vert\kern-0.25ex\right\vert}}
\theoremstyle{plain}
\title{Shrinking targets for semisimple groups}
\author{Anish Ghosh }
\author{Dubi Kelmer}
\thanks{Anish Ghosh is partially supported by ISF-UGC. Dubi Kelmer is partially supported by NSF grant DMS-1401747.}
\email{ghosh@math.tifr.res.in}
\address{School of Mathematics, Tata Institute of Fundamental Research, Homi Bhabha Road, Colaba, Mumbai 400005, India}
\email{kelmer@bc.edu}
\address{Boston College, Boston, MA}
\subjclass{}%
\keywords{}%
\date{\today}%
\dedicatory{}%
\begin{document}

\begin{abstract}
We study the shrinking target problem for actions of semisimple groups on homogeneous spaces, with applications to logarithm laws and Diophantine approximation related to an effective version of the Oppenheim conjecture valid for almost all quadratic forms.
\end{abstract}

 \maketitle
 
 \section{Introduction}
Let $H$ be a group acting ergodically on a probability space $(X,\mu)$. In this paper, we are interested in the \emph{shrinking target problem} for the $H$ action.  More explicitly, when considering a growing $H$ ball, we want to know how fast does the ball have to grow, so that a typical orbit will hit a shrinking family of targets. We will show that, in many cases, this typical rate depends only on the dimension of the $H$ balls, and not on the family of shrinking targets. We then give applications to logarithm laws on homogeneous spaces as well as to effective results on the Oppenheim conjecture for generic quadratic forms.

 \subsection{The shrinking target problem}
 Let $\{A_t\}_{t\geq 1}$ denote a decreasing family of measurable subsets of $X$ that are shrinking at a more or less uniform rate $\mu(A_t)\asymp 1/t$.  Here and below we denote $F(t)\asymp G(t)$ if there is some constant $c>1$ such that 
 $$c^{-1}F(t)\leq G(t)\leq cF(t).$$
 We will also denote $F(t)\ll G(t)$ if $F(t)\leq c G(t)$ for some constant $c>0$ and we will use subscripts to indicate the dependance of the constant on parameters.\\
 
\noindent In order to quantify growing $H$ orbits we fix a norm on $H$ and consider growing norm balls $H_t=\{g\in H: \norm{h}\leq t\}$. We assume the following mild regularity on the growth of these norm balls:  There are exponents $d_H^-\leq d_H^+$ such that for any $\epsilon>0$ we have
\begin{equation}\label{e:dH}
t^{d_H^--\epsilon}\ll_\epsilon m(H_t)\ll_\epsilon  t^{d_H^++\epsilon}
\end{equation} where $m$ denotes Haar measure on $H$. We note that in most applications the $H$-balls grow more regularly, with a uniform exponent $d_H=d_H^-=d_H^+$.

\noindent To measure how fast an $H$ orbit has to grow in order to hit the shrinking targets we define for any $x\in X$ the critical exponent 
$$\alpha(x)=\alpha_{H,A}(x),$$ to be the infimum over all $\eta\geq 0$ such that the set $\{h: xh\in A_{\|h\|^\eta}\}$ 
%%\begin{equation*}
%%\{h: xh\in A_{\|h\|^\eta}\}
%%\end{equation*}
is bounded, if such an exponent exists. 
Under some mild regularity assumptions on the norm (see Lemma \ref{l:inv}) the critical exponent $\alpha(x)$ is invariant under the $H$ action, and hence from ergodicity it is constant almost everywhere. We denote this constant by $\alpha_{H,A}$.  \\

\noindent We say that the shrinking targets are stable under small perturbation in $H$ if $\mu(A_t)\asymp\mu(A_tH_1)$. Assuming this holds, a simple argument using Lebesgue dominated convergence (see Lemma \ref{l:BC}) gives an upper bound
 \begin{equation}\label{e:BC}
 \alpha_{H,A}\leq d_H^+.
 \end{equation}
 \begin{rem}
The assumption that the shrinking sets are stable under small perturbation by $H$ is natural and holds in many cases. Moreover, when it does not hold one can consider thickened shrinking targets given by $\tilde{A}_t=A_tH_1$, then the new shrinking targets are stable, and it is not hard to see that $\alpha_{H,A}=\alpha_{H,\tilde{A}}$.
\end{rem}
\noindent The main result of this paper is to give a lower bound for this exponent. To do this we use an effective mean ergodic theorem (see section \ref{s:MET}  below).
In many cases,  the lower bound we obtain coincides with the upper bound, in which case $\alpha_{H,A}=d_H$, and this holds independently of the family of shrinking targets.

\subsection{Mean Ergodic Theorem} \label{s:MET}
We now recall the notion of a mean ergodic theorem for the $H$ action.
The $H$ action on $X$ leads to a unitary representation of $H$ on $L^{2}(X)$. For $f \in L^{2}(X)$, consider the unitary averaging operator:
 \begin{equation}
 \pi_{t}(f)(x) := \frac{1}{m(H_t)}\int_{H_t}f(xh)dm(h)
 \end{equation}
 \noindent We say that the $H$ action on $X$ satisfies an \emph{effective mean ergodic theorem} if there exists $\kappa > 0$ such that for any $f \in L^{2}(X)$, 
 \begin{equation}\label{eq:met}
 \|\pi_{t}f - \int_{X}fd\mu\|_{2} \ll_\kappa m(H_t)^{-\kappa}\|f\|_2.
 \end{equation}
 
With this notion we can show
\begin{thm}\label{main}
Assuming the $H$ action on $X$ satisfies an effective mean ergodic theorem with exponent $\kappa$, we have a lower bound $\alpha_{H,A}\geq 2\kappa d_H^-$.
\end{thm}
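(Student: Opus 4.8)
The plan is to show that for every $\eta < 2\kappa d_H^-$ the set $S_\eta(x) = \{h \in H : xh \in A_{\|h\|^\eta}\}$ is unbounded for $\mu$-almost every $x$; since $\alpha(x)$ is the infimum of those $\eta$ for which $S_\eta(x)$ is bounded, this yields $\alpha_{H,A} \geq 2\kappa d_H^-$ almost everywhere. To detect unboundedness I introduce the counting function $N_T(x) = \int_{H_T} \mathbf 1_{A_{\|h\|^\eta}}(xh)\,dm(h)$, the measure of $S_\eta(x)\cap H_T$: if $N_T(x)\to\infty$ then $S_\eta(x)$ cannot be contained in any ball $H_R$, hence is unbounded. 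So it suffices to prove $N_T \to \infty$ almost everywhere, and I will do this with a second-moment (Chebyshev) argument driven by the effective mean ergodic theorem \eqref{eq:met}.

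First I would compute the mean. By Fubini and the invariance of $\mu$, $M_T := \int_X N_T\,d\mu = \int_{H_T}\mu(A_{\|h\|^\eta})\,dm(h)$. Using $\mu(A_s)\asymp 1/s$ and the crude bound $\|h\|\leq T$ on $H_T$, together with \eqref{e:dH}, this is bounded below by $M_T \gg_\epsilon T^{-\eta} m(H_T) \gg_\epsilon T^{d_H^- - \eta - \epsilon}$, which tends to infinity for every $\eta < d_H^-$; this covers the whole range $\eta < 2\kappa d_H^-$ of interest, since the mean ergodic exponent always satisfies $\kappa \le \tfrac12$. Thus the obstruction is not the size of the mean but the size of the fluctuations.

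The heart of the argument is the variance bound $\|N_T - M_T\|_2 = o(M_T)$. Here I would decompose $H_T$ into dyadic shells $G_k = \{h : 2^k \le \|h\| < 2^{k+1}\}$, $0 \le k \le K = \lfloor \log_2 T\rfloor$. Since $A_s$ is decreasing with $\mu(A_s)\asymp 1/s$, on $G_k$ one may sandwich $\mathbf 1_{A_{\|h\|^\eta}}$ between the indicators of $B_k := A_{2^{(k+1)\eta}}$ and $A_{2^{k\eta}}$, both of measure $\asymp 2^{-k\eta}$; it therefore suffices to control $n_k(x) = \int_{G_k}\mathbf 1_{B_k}(xh)\,dm(h)$ and its sum over $k$. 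Writing the shell average as a difference of ball averages, $n_k(x) = m(H_{2^{k+1}})\pi_{2^{k+1}}(\mathbf 1_{B_k})(x) - m(H_{2^k})\pi_{2^k}(\mathbf 1_{B_k})(x)$, and subtracting the mean of each ball average via \eqref{eq:met} gives $\|n_k - m(G_k)\mu(B_k)\|_2 \ll_\kappa m(H_{2^{k+1}})^{1-\kappa}\,\mu(B_k)^{1/2}$, where the crucial square root comes from $\|\mathbf 1_{B_k}\|_2 = \mu(B_k)^{1/2}$. Comparing the $k$-th error to the $k$-th main term $m(G_k)\mu(B_k)$ produces the ratio $\asymp m(H_{2^k})^{-\kappa}\mu(B_k)^{-1/2} \ll_\epsilon 2^{k(\eta/2 - \kappa(d_H^- - \epsilon))}$, which decays geometrically precisely when $\eta < 2\kappa d_H^-$. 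Summing by the triangle inequality, the error is then dominated by the top scale, where $m(H_{2^K}) = m(H_T)$ appears, and one gets $\|N_T - M_T\|_2 \ll m(H_T)^{-\kappa}T^{\eta/2}\,M_T = o(M_T)$. This is exactly where the factor $2$ (from the second moment, i.e. the square root $\mu(B_k)^{1/2}$) and the lower exponent $d_H^-$ (from bounding the negative power $m(H_t)^{-\kappa}$ by the lower volume growth) enter the threshold.

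Finally I would conclude by Chebyshev: choosing a subsequence $T_j$ with $\sum_j \|N_{T_j} - M_{T_j}\|_2^2\,M_{T_j}^{-2} < \infty$ gives $N_{T_j}/M_{T_j} \to 1$ almost everywhere, and since $M_{T_j}\to\infty$ this forces $N_{T_j}(x) \to \infty$, hence $S_\eta(x)$ unbounded, for almost every $x$. Letting $\eta$ increase to $2\kappa d_H^-$ through a countable sequence completes the proof. I expect the main obstacle to be the variance estimate of the third paragraph: the mean ergodic theorem controls only ball averages in $L^2$, not the individual correlations $\langle \mathbf 1_{A_s}\circ h_1,\ \mathbf 1_{A_{s'}}\circ h_2\rangle$, so the dyadic reorganization into ball averages — and the verification that the resulting errors sum to $o(M_T)$ under the mild growth hypothesis \eqref{e:dH} (where a shell need not be comparable to the ball it bounds) rather than under a uniform exponent — is the technical crux.
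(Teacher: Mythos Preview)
Your mechanism is the right one --- effective mean ergodic theorem, Chebyshev, Borel--Cantelli along a geometric subsequence --- and it leads to the claimed bound. But the route through the counting function $N_T(x)=\int_{H_T}\mathbf 1_{A_{\|h\|^\eta}}(xh)\,dm(h)$ with a dyadic-shell decomposition is a detour, and the place you flag as ``the technical crux'' is a genuine gap in the generality stated. The shell error at level $k$ is $\ll m(H_{2^{k+1}})^{1-\kappa}\mu(B_k)^{1/2}$, and the exponent $1-\kappa>0$ forces the \emph{upper} growth rate $d_H^+$ into the estimate when you sum over $k$. Under the hypothesis \eqref{e:dH} with $d_H^-<d_H^+$ there is no reason the total error is dominated by the top scale, nor that it is $o(M_T)$; the argument as written only closes when $d_H^-=d_H^+$.

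The paper avoids this by not counting at all. For a single target $B$, apply \eqref{eq:met} to $f=\mathbf 1_B$ and use that $\pi_T f$ vanishes on the miss set $\mathcal C_{T,B}=\{x:xH_T\cap B=\emptyset\}$; Chebyshev at the value $0$ gives
\[
\mu(\mathcal C_{T,B})\le \frac{\|\pi_Tf-\mu(B)\|_2^2}{\mu(B)^2}\ll \frac{1}{\mu(B)\,m(H_T)^{2\kappa}}.
\]
Now take $B=A_{(2T)^\alpha}$ with $\eta<\alpha<2\kappa d_H^-$ and run Borel--Cantelli along $T=2^k$: since only the \emph{negative} power $m(H_T)^{-2\kappa}$ appears, only $d_H^-$ enters and the sum converges. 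This yields the stronger conclusion that for a.e.\ $x$ and all large $T$ one has $xH_T\cap A_{T^\eta}\neq\emptyset$. Your approach can be salvaged in the same spirit: since $A_{\|h\|^\eta}\supseteq A_{T^\eta}$ on $H_T$, bound $N_T(x)\ge m(H_T)\,\pi_T(\mathbf 1_{A_{T^\eta}})(x)$ and apply \eqref{eq:met} once; the variance-to-mean ratio is then $\ll m(H_T)^{-\kappa}T^{\eta/2}$ with no shell summation and no $d_H^+$.
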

\begin{rem}
The theorem implies that  for any $\eta>2\kappa d_H^-$ for almost all $x\in X$ there are arbitrarily large values of $t$ such that $xH_t\cap A_{t^\eta}\neq \emptyset$.
In fact, this method actually gives a stronger result showing that for almost all $x\in X$, for all $t$ sufficiently large, $xH_T\cap A_{T^\eta}\neq \emptyset$; see Proposition \ref{p:upper}.
\end{rem}
 
\noindent  The mean ergodic theorem above holds in a wide level of generality whenever the representation of $H$ on $L^2(X)$ has a spectral gap, moreover, the constant $\kappa > 0$ above is directly related to the quality of the spectral gap. We refer to \cite{GorodnikNevo10} for an extensive discussion of the general setup and proofs. In particular,   for spaces $X=\G\bk G$ with a locally compact group $G$, lattice subgroups $\Gamma$ of $G$, and unimodular subgroups $H$ of $G$ an effective mean ergodic theorem can be proved. Moreover,  in many cases, \eqref{eq:met}  is satisfied for a triple $(G, H, \G)$ for any $\kappa < 1/2$. We will call such a triple a \emph{tempered triple}, i.e. $(G, H, \G)$ is a tempered triple if (\ref{eq:met}) holds for every $\kappa < 1/2$.  
An extensive list of tempered triples is provided in \cite{GhoshGorodnikNevo14}, for the readers convenience we recall some of their examples below:
\begin{enumerate}
\item Consider  $H = \SL_{2}(\R)$ and $G =  \SL_{3}(\R)$ such that the representation of $\SL_{2}(\R)$ in $\SL_{3}(\R)$ is non-trivial and irreducible. Then the triple $(G, H, \G)$ is tempered for any lattice subgroup $\G$ of $G$. This phenomenon was used by Kazhdan in his original proof of Property T.\\
\item Let $n \geq 3$ and consider $G = \Sp(n,1)$. There is an embedding of $H = \Sp(2,1)$ such that the triple $(G, H, \G)$ is tempered for all lattices $\G$ in $G$.\\
\item Consider $H=\SL_2(\R)$,  $G=\SL_2(\C)$ then $(G,H,\G)$ is a tempered triple for $\G=\SL_2(\Z[i])$. 
\end{enumerate}
\noindent For tempered triples, the upper bound \eqref{e:BC} combined with Theorem \ref{main} gives 
\begin{cor}
 Let $H\leq G$ act on $X=\G\bk G$ and suppose $(G, H, \G)$ is a tempered triple and that $d_H^-=d_H^+=d_H$. Then for any family $\{A_t\}$ of shrinking targets which is stable under small perturbations by $H$, the critical exponent satisfies $\alpha_{A,H}=d_H$. \end{cor}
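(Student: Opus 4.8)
The plan is to obtain the corollary by squeezing the critical exponent between the general upper bound \eqref{e:BC} and the lower bound of Theorem \ref{main}, exploiting the fact that a tempered triple supplies mean ergodic exponents $\kappa$ arbitrarily close to $1/2$.

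For the upper bound, I would observe that the hypothesis that $\{A_t\}$ is stable under small perturbations by $H$ is precisely the condition $\mu(A_t)\asymp\mu(A_tH_1)$ under which \eqref{e:BC} was established; it therefore applies verbatim and gives $\alpha_{H,A}\le d_H^+=d_H$, where the last equality is the standing assumption $d_H^-=d_H^+=d_H$.

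For the lower bound I would unwind the definition of a tempered triple: by assumption \eqref{eq:met} holds for every $\kappa<1/2$. Fixing any such $\kappa$, Theorem \ref{main} applies with that exponent and yields $\alpha_{H,A}\ge 2\kappa d_H^-=2\kappa d_H$. Since $\alpha_{H,A}$ is a single number independent of $\kappa$, while this inequality holds for every admissible $\kappa$, passing to the supremum as $\kappa\uparrow 1/2$ gives $\alpha_{H,A}\ge \sup_{\kappa<1/2}2\kappa d_H=d_H$.

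Combining the two bounds forces $\alpha_{H,A}=d_H$, which is the assertion (after the harmless relabelling $\alpha_{A,H}=\alpha_{H,A}$). I do not expect any genuine obstacle: the corollary is a formal consequence of the two displayed results, and the only point requiring a word of care is that the lower bound of Theorem \ref{main} must be invoked separately for each admissible $\kappa$ before the limit $\kappa\to 1/2$ is taken. This is legitimate precisely because ``$(G,H,\G)$ is tempered'' grants \eqref{eq:met}, and hence the hypothesis of Theorem \ref{main}, for every $\kappa<1/2$, so that the family of lower bounds $\{2\kappa d_H\}_{\kappa<1/2}$ is all available and its supremum $d_H$ is attained in the limit.
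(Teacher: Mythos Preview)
Your proposal is correct and matches the paper's own approach: the corollary is stated immediately after the sentence ``For tempered triples, the upper bound \eqref{e:BC} combined with Theorem \ref{main} gives,'' so the intended proof is precisely the squeeze you describe, using \eqref{e:BC} for $\alpha_{H,A}\le d_H$ and Theorem \ref{main} with $\kappa\uparrow 1/2$ for $\alpha_{H,A}\ge d_H$.
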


 \subsection{Logarithm laws}
Let $G$ be a semisimple Lie group and $\G$ be a lattice in $G$. If $H$ is a subgroup of $G$ acting ergodically on $X=\G\bk G$, the critical exponent of an appropriate family of shrinking sets is closely related to logarithm laws. 
Logarithm laws are usually defined for a one parameter flow, measuring the fastest rate of excursions into the cusps, however, they have natural generalization for other group actions of a subgroup $H\leq G$. Moreover, it is also possible to generalize this to study visits of the flow to other shrinking neighborhoods. We describe these two generalizations below:\\

\noindent Assume that $\G \bk G$ is non compact. Given any natural distance function $\dist$ on $\G\bk G$, define a cusp neighborhood 
\begin{equation}\label{e:Bcusp}
B_s(\infty)=\{x\in \G\bk G| \dist(x,o)> s\},
\end{equation}
where $o\in \G\bk G$ is any fixed base point. Similarly, and with no assumptions on the cocompactness of $\G$, for any point $y\in \G\bk G$ we let 
\begin{equation}\label{e:By}
B_s(y)=\{x\in \G\bk G| \dist(x,y)< s\}.
\end{equation}
We then have that  (after appropriately normalizing the distance function) $\mu(B_s(\infty))\asymp e^{-s}$ as $s\to \infty$, and that $\mu(B_s(y))\asymp s^{d_X}$ as $s\to 0$.
To measure how fast the orbit $xH$ makes excursions into the cusp (respectively, approaches the point $y$) we define for any $x\in \G\bk G$
 \begin{equation}
 \beta_t(x)=\sup_{h\in H_t}\dist(xh,o)
 \end{equation}
 and
  \begin{equation}
 \beta_t(x,y)=\inf_{h\in H_t}\dist(xh,y).
 \end{equation}
 
\noindent When the subgroup $H$ is a diagonalizable one parameter group, Kleinbock and Margulis  \cite{KleinbockMargulis1999}  (following the work of Sullivan \cite{Sullivan1982} on the geodesic flow on hyperbolic spaces) used exponential mixing of such flows to  show that (with the above normalization of the distance function) for almost all $x$
 $$\limsup_{t\to\infty}\frac{\beta_t(x)}{\log(t)}=1.$$
 Logarithm laws for diagonalisable one parameter groups acting on the Bruhat Tits building of a semsimple group defined over a local field of positive characteristic were obtained by Athreya, Ghosh and Prasad \cite{AthreyaGhoshPrasad12}.  In the special case when $G$ is of real rank one, $H$ is diagonalizable, and the distance function comes from the hyperbolic distance on the corresponding symmetric space, Maucourant \cite{Maucourant06} considered also shrinking neighborhoods of a point $y$ and showed that for almost all $x\in \G\bk G$ one has
 \begin{equation}\label{e:Maucourant}
  \limsup_{t\to\infty}\frac{-\log\beta_t(x,y)}{\log(t)}=\frac{1}{d_X-1}.
 \end{equation}
 This was later generalized by Hersonsky and Paulin \cite{HersonskyPaulin10} to higher rank situations.\\
 
 For non diagonalizable actions, less is known. Here Athreya and Margulis considered the cases where $H$ is the expanding horospherical group corresponging to a diagonalizable flow \cite{AthreyaMargulis14} as well as the case where $H$ is a one dimensional unipotent group acting on the space of lattices $\SL_n(\Z)\bk \SL_n(\R)$ \cite{AthreyaMargulis09} and in both cases they managed to prove logarithm laws.
 %showed that 
 %$$\limsup_{t\to\infty}\frac{\beta_t(x)}{\log(t)}=1.$$
 Similar results were also obtained by Kelmer and Mohammadi \cite{KelmerMohammadi12} for cusp excursions of one parameter unipotent flows on spaces of the form $\G\bk G$ where $G$ is a product of a number of copies of $\SL_2(\R)$ and $\SL_2(\C)$ and $\G$ is irreducible, and more recently by Yu \cite{Yu2016} when $G=\SO(n,1)$. For the analogous problems for $\beta_t(x,y)$, to the best of our knowledge, there are no results for non diagonalizable flows.
 
In a somewhat dual setting, one could consider the action of a lattice $\G$ on the homogeneous space $G/H$ where $H$ is a closed subgroup of $G$ and study the shrinking target problem for the $\G$ action on $G/H$. Recently there has been considerable interest in this problem, see \cite{LaurentNogueira12} for example. In particular in \cite{GhoshGorodnikNevo14} Ghosh, Gorodnik and Nevo developed a technique to address this problem using an effective mean ergodic theorem for the $H$ action on $\G\bk G$ and a duality principle. The results in \cite{GhoshGorodnikNevo14} are phrased in terms of a \emph{Diophantine exponent} of a point and in loc. cit. generically best possible values for these exponents are obtained in a wide variety of cases. We refer the reader also to \cite{GhoshGorodnikNevo15} for a survey of these techniques and more examples of estimates for Diophantine exponents for lattice orbits on homogeneous varieties.
  
 We now return to logarithm laws for $H$ actions on $\G\bk G$ and  relate these problems to the more general problem of finding the critical exponent of a family of shrinking targets described above. 
 \begin{prop}\label{prop2}
 For $x \in X:=\G\bk G$, let $\alpha(x)$ denote the critical exponent corresponding to  the shrinking family $A_t=B_{\log t}(\infty)$. Then, for every $x \in X$ such that $\alpha(x)$ is defined, we have
\begin{equation}
\limsup_{t \to \infty} \frac{\beta_{t}(x)}{\log t} = \alpha(x).
\end{equation} 
Similarly, assuming the thickened balls satisfy $\mu(B_{s}(y)H_1)\asymp s^{d_0}$ for some exponent $d_0$, if $\alpha(x,y)$ is the critical exponent corresponding to the shrinking family $A_t=B_{t^{1/d_0}}(y)H_1$, 
then for every $x \in X$ for which
 $\alpha(x,y)$ is defined, either $y\in xH$ or
\begin{equation}
\limsup_{t \to \infty} \frac{-\log\beta_{t}(x,y)}{\log t} = \frac{\alpha(x, y)}{d_{0}}.
\end{equation} 
 \end{prop}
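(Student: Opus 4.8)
The plan is to prove both assertions by the same two-sided squeezing argument, exploiting the monotonicity of $\{A_t\}$ in $t$. Since $A_t$ is decreasing, the set $\{h : xh \in A_{\|h\|^\eta}\}$ shrinks as $\eta$ grows (for $\|h\|\geq 1$), so that $\alpha(x)$ is precisely the threshold at which this set passes from unbounded to bounded: for $\eta>\alpha(x)$ it is bounded and for $\eta<\alpha(x)$ it is unbounded. For each statement I would establish the two inequalities ``$\limsup\leq$ threshold'' and ``$\limsup\geq$ threshold'' by taking $\eta$ just above and just below the critical value and letting $\eta$ tend to it.

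For the cusp statement, I would first unwind the defining condition. With $A_t=B_{\log t}(\infty)$, membership $xh\in A_{\|h\|^\eta}$ reads $\dist(xh,o)>\eta\log\|h\|$, so $\{h:xh\in A_{\|h\|^\eta}\}$ is exactly $\{h:\dist(xh,o)>\eta\log\|h\|\}$. Fixing $\eta>\alpha(x)$, boundedness gives $R$ with $\dist(xh,o)\leq\eta\log\|h\|$ once $\|h\|>R$. Splitting $\beta_t(x)=\sup_{\|h\|\leq t}\dist(xh,o)$ into the compact part $\|h\|\leq R$ (bounded by a constant $M_R$, using continuity of the action and properness of the norm) and the part $R<\|h\|\leq t$ (bounded by $\eta\log t$), I get $\beta_t(x)\leq\max(M_R,\eta\log t)$, hence $\limsup_t\beta_t(x)/\log t\leq\eta$; letting $\eta\downarrow\alpha(x)$ gives ``$\leq$''. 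Conversely, fixing $\eta<\alpha(x)$, unboundedness yields $h_n$ with $\|h_n\|\to\infty$ and $\dist(xh_n,o)>\eta\log\|h_n\|$; setting $t_n=\|h_n\|$ forces $\beta_{t_n}(x)\geq\dist(xh_n,o)>\eta\log t_n$, so $\limsup_t\beta_t(x)/\log t\geq\eta$, and letting $\eta\uparrow\alpha(x)$ finishes.

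The point statement follows the same scheme, with two adjustments. Matching measures $\mu(A_t)\asymp 1/t$ together with $\mu(B_s(y)H_1)\asymp s^{d_0}$ forces the radius to scale like $t^{-1/d_0}$, so that $xh\in A_{\|h\|^\eta}$ (modulo the $H_1$-thickening) becomes $\dist(xh,y)\ll\|h\|^{-\eta/d_0}$; the factor $1/d_0$ is exactly what produces $\alpha(x,y)/d_0$ in the conclusion. Running the two inequalities as before, a sequence $h_n$ realizing unboundedness at level $\eta<\alpha(x,y)$ gives $\beta_{t_n}(x,y)<t_n^{-\eta/d_0}$, hence $\limsup_t(-\log\beta_t(x,y))/\log t\geq\eta/d_0$; and boundedness at level $\eta>\alpha(x,y)$ gives $\dist(xh,y)\geq\|h\|^{-\eta/d_0}$ for $\|h\|>R$, hence $\beta_t(x,y)\geq t^{-\eta/d_0}$ once $t$ is large, yielding the matching upper bound.

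The two points requiring genuine care — and where I expect the main obstacle — are the thickening by $H_1$ and the hypothesis $y\notin xH$. For the upper bound on $-\log\beta_t(x,y)$ I would exploit $e\in H_1$: non-membership $xh\notin B_{\|h\|^{-\eta/d_0}}(y)H_1$ forces $\dist(xhh_1^{-1},y)\geq\|h\|^{-\eta/d_0}$ for every $h_1\in H_1$, in particular for $h_1=e$, so the unthickened distance is controlled directly. For the lower bound the thickening only enlarges the target, and since $\|hh_1^{-1}\|\asymp\|h\|$ uniformly over $h_1\in H_1$ (submultiplicativity of the norm with $H_1$ relatively compact), replacing $h$ by the nearby $h'=hh_1^{-1}$ perturbs $t_n$ only by a bounded factor, which disappears in the $\limsup$. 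Finally, $y\notin xH$ is exactly what makes the compact-part contribution $\inf_{\|h\|\leq R}\dist(xh,y)=\dist(y,xH_R)$ strictly positive — $xH_R$ is compact, hence closed, and omits $y$ — so it cannot interfere with the decay $\beta_t(x,y)\to 0$; if instead $y\in xH$ then $\beta_t(x,y)=0$ for all large $t$ and the left-hand side is $+\infty$, which is the excluded degenerate case.
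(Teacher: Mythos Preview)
Your proof is correct and follows essentially the same contradiction/squeezing argument as the paper, which proves one inequality in each case (the direction $\limsup\geq\alpha(x)$ for the cusp and $\limsup\leq\alpha(x,y)/d_0$ for the point) and declares the other ``similar.'' You are more explicit than the paper in two places it glosses over: you handle both inequalities in each case, and you spell out how the $H_1$-thickening is absorbed (via $e\in H_1$ in one direction and $\|hh_1^{-1}\|\asymp\|h\|$ in the other) and why $y\notin xH$ makes the compact-norm contribution to $\beta_t(x,y)$ harmless --- the paper uses this hypothesis only to force $\|h_k\|\to\infty$ along a sequence with $xh_k\to y$, which is the same point in dual form.
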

 \begin{rem}
 In many cases the assumption in the second statement holds and $d_0$ can be computed explicitly. In particular,
 when $H$ is a Lie subgroup of $G$ and the distance function is the Riemannian metric on $G$ induced from the Killing form, then $d_0=\dim(G)-\dim(H)$.
 \end{rem}

\noindent Using Proposition \ref{prop2} and Theorem \ref{main} we immediately obtain: 
 \begin{cor}
 Let $H\leq G$ act on $\G\bk G$ and suppose $(G, H, \G)$ is a tempered triple and that $d_H^-=d_H^+=d_H$. Then for almost all $x$,
 \begin{equation}
\limsup_{t \to \infty} \frac{\beta_{t}(x)}{\log t} = d_H
\end{equation} 
and, under the same assumption as in Proposition \ref{prop2}, for all $y\in \G\bk G$ and for almost all $x$,
\begin{equation}
\limsup_{t \to \infty} \frac{-\log\beta_{t}(x,y)}{\log t} = \frac{d_H}{d_0}.
\end{equation} 
 \end{cor}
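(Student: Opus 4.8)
The plan is to deduce both logarithm laws directly from Proposition \ref{prop2} by identifying the two relevant critical exponents with $d_H$. The step common to both is a sandwiching of the almost-everywhere constant critical exponent. For any family of shrinking targets that is stable under small perturbations by $H$, Lemma \ref{l:inv} together with ergodicity makes $\alpha(x)$ equal almost everywhere to a constant $\alpha_{H,A}$, and \eqref{e:BC} (via Lemma \ref{l:BC}) gives the upper bound $\alpha_{H,A}\leq d_H^+=d_H$. For the matching lower bound I would use that $(G,H,\G)$ is a tempered triple, so the effective mean ergodic theorem \eqref{eq:met} holds for \emph{every} $\kappa<1/2$; Theorem \ref{main} then yields $\alpha_{H,A}\geq 2\kappa d_H^-$ for each such $\kappa$, and taking the supremum over $\kappa<1/2$ while using $d_H^-=d_H$ gives $\alpha_{H,A}\geq d_H$. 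Hence $\alpha_{H,A}=d_H$ whenever the stability hypothesis holds.

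First I would apply this to the cusp family $A_t=B_{\log t}(\infty)$, noting $\mu(A_t)\asymp e^{-\log t}=1/t$. Before invoking \eqref{e:BC} I must verify stability: since a perturbation $h_1\in H_1$ changes $\dist(\cdot,o)$ by at most a fixed constant $C$, one has $B_{\log t}(\infty)H_1\subseteq B_{\log t-C}(\infty)$, whence $\mu(A_tH_1)\asymp e^{-(\log t-C)}\asymp 1/t\asymp\mu(A_t)$, so the family is stable. The sandwich then gives $\alpha_{H,A}=d_H$, i.e. $\alpha(x)=d_H$ for almost every $x$, and Proposition \ref{prop2} converts this into $\limsup_{t\to\infty}\beta_t(x)/\log t=\alpha(x)=d_H$ almost everywhere.

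Next I would fix $y$ and apply the same sandwich to the thickened family $A_t=B_{t^{-1/d_0}}(y)H_1$ of Proposition \ref{prop2}, for which $\mu(A_t)\asymp\big(t^{-1/d_0}\big)^{d_0}=1/t$ by hypothesis and which is stable under $H_1$ by construction. This yields $\alpha(x,y)=d_H$ for almost every $x$. Proposition \ref{prop2} then gives $\limsup_{t\to\infty}-\log\beta_t(x,y)/\log t=\alpha(x,y)/d_0=d_H/d_0$ for every $x$ with $y\notin xH$ and $\alpha(x,y)$ defined. To finish I must discard the exceptional locus: for fixed $y$ one has $\{x:y\in xH\}=yH$, a single $H$-orbit, which is $\mu$-null because $d_0=\dim G-\dim H>0$; so the formula holds for almost every $x$.

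Since the argument is essentially a bookkeeping combination of results already in hand, there is no single hard analytic step. The points that require genuine care are (i) verifying the stability $\mu(A_t)\asymp\mu(A_tH_1)$ for the cusp family, without which \eqref{e:BC} is unavailable, and (ii) checking that the exceptional set $\{x:y\in xH\}$ from Proposition \ref{prop2} is $\mu$-null, which rests on $d_0>0$. A secondary subtlety, and the precise place where temperedness enters, is the optimization $\kappa\to 1/2$ in the lower bound, which is exactly what forces the lower and upper bounds to coincide at $d_H$.
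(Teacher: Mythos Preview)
Your argument is correct and matches the paper's own treatment: the paper does not give a separate proof of this corollary but states that it follows immediately from Proposition~\ref{prop2} and Theorem~\ref{main}, and your proposal simply spells out that deduction (sandwich $\alpha_{H,A}$ between $2\kappa d_H^-$ and $d_H^+$, let $\kappa\to 1/2$, then feed $\alpha_{H,A}=d_H$ into Proposition~\ref{prop2}). The extra details you supply---stability of the cusp family via the bounded displacement of $H_1$, stability of the thickened ball family by construction, and nullity of the single $H$-orbit $yH$---are the natural checks and are consistent with the paper's standing remarks.
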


 \subsection{Diophantine approximation}
 Another application of our result is related to an effective version of the Oppenheim conjecture. Given an irrational indefinite ternary quadratic form, $Q$, the Oppenheim conjecture, proved by Margulis \cite{Margulis1989}, implies that $Q(n)$ with $n\in \Z^3\setminus\{0\}$ takes values arbitrary close to zero. A way to make this quantitative is to ask  how close to zero can $Q(n)$ get when considering only integer vectors with $\norm{n}\leq T$. One can also try and estimate the number of integer vectors of bounded norm with $Q(n)$ small (see e.g. \cite{EskinMargulisMozes1998,EskinMargulisMozes2005,MargulisMohammadi11}), however we do not address this question here.\\% One of the main difficulties for establishing such quantitative results is dealing with forms that are very well approximated by integral forms. However we do not address this question here.\\

\noindent The problem of making Margulis' result and other related results on the density of values of quadratic forms \emph{effective} is a difficult problem with a long history. One of the main difficulties for establishing effective results is distinguishing between rational forms and irrational forms that are very well approximated by rational ones. We mention the work of Lindenstrauss-Margulis \cite{LindenstraussMargulis14} on this problem, implying that, unless $Q$ is very well approximated by a rational form, the values of $Q(n)$ with  integer vectors $\|n\|\leq T$ can be as small as $O(\frac{1}{\log^\kappa T})$ for some $\kappa>0$ (see also  
% G\"{o}tze-Margulis
 \cite{GotzeMargulis2010} for other effective results on this problem).\\
%One of the main difficulties here is to handle irrational forms that are very well approximated by rational forms.  

\noindent While our method does not say anything about any specific form, it does give very strong effective results (i.e., replacing the logarithm by a power) which hold for generic forms (i.e. for almost every form). We should also mention forthcoming results which are closer in spirit, of Ghosh-Gorodnik-Nevo \cite{GhoshGorodnikNevo16} and Athreya-Margulis \cite{AthreyaMargulis16} where very general effective results are proved for generic quadratic forms. \\

\noindent To make the notion of almost every form more precise we need to parametrize the space of indefinite ternary quadratic forms. Recall the action of $\SL_3(\R)$ on forms is given by 
$$Q^g(v)=Q(vg),$$
with $g\in \SL_3(\R)$ acting on $v\in \R^3$ linearly. We say that two forms are equivalent if $Q_1=\lambda Q_2^\g$ with $\lambda\in \R$ and  $\g\in\SL_3(\Z)$. Note that equivalent forms take the same values on $\Z^3$ after scaling by a constant. To avoid the scaling ambiguity we restrict to forms of determinant one. In order to parametrize the space of determinant one forms up to equivalence fix a form $Q_0(v)$ given by
\begin{equation} \label{e:Q0}Q_0(x,y,z)=x^2+y^2-z^2,
\end{equation}
and note that any determinant one indefinite ternary quadratic form is given by $Q=Q_0^g$ with some $g\in \SL_3(\R)$. Moreover, two such forms  $Q_0^{g},\;Q_0^{g'}$ are equivalent if and only if $g'=\g g$ with $\g\in \SL_3(\Z)$.  We can thus parametrize the space of determinant one indefinite ternary quadratic forms up to equivalence by the space of lattices in $\R^3$
$$X_3=\SL_3(\Z)\bk \SL_3(\R)$$
 where a point $x=\G g\in X_3$ corresponds to the lattice $\Lambda=\Z^3g$ and to the form $Q=Q_0^g$. Consequently, the probability measure $\mu$ on $X_3$ coming from Haar measure on $\SL_3(\R)$ gives us a natural measure on the space of forms.\\

\noindent With this parametrization we have 
%\begin{thm}\label{t:Opp}
%Fix a norm on $\R^3$ and a  positive  $\tau<1$. Then,  for $\mu$-almost all $g\in \SL_3(\Z)\bk \SL_3(\R)$ there are infinitely many $n\in \Z^3$ with 
%\begin{equation}
%|Q_0^g(n)|\ll_{g} \norm{n}^{-\tau}
%\end{equation}
%\end{thm}
\begin{thm}\label{t:Opp}
Fix a norm on $\R^3$ and a  positive  $\tau<1$. Then,  for $\mu$-almost all $g\in X_3$ for all sufficiently large $T$ there is $n\in \Z^3$ with $\norm{n}<T$ and 
\begin{equation}
|Q_0^g(n)|\ll_{g} T^{-\tau}
\end{equation}
\end{thm}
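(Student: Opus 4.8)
The plan is to realize Theorem~\ref{t:Opp} as an instance of the cusp shrinking-target problem for the triple $(G,H,\G)=(\SL_3(\R),\SO(2,1),\SL_3(\Z))$, where $H=\SO(Q_0)$ is embedded as the stabilizer of $Q_0$ under the right action $Q_0^g(v)=Q_0(vg)$. Up to conjugacy this is the irreducible three-dimensional representation of $\SL_2(\R)$, so by example (1) in the list above $(G,H,\G)$ is a tempered triple, and a direct computation with the Cartan decomposition (the element $\operatorname{diag}(e^s,e^{-s})$ maps to $\operatorname{diag}(e^{2s},1,e^{-2s})$, of operator norm $e^{2s}$) shows $d_H^-=d_H^+=d_H=1$ for the norm induced from $\SL_3(\R)$. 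The decisive structural feature is that $H$ \emph{preserves} $Q_0$: along an orbit $\G gh$ the value $Q_0(ngh)=Q_0(ng)=Q_0^g(n)$ is unchanged, so excursions of $\G gH$ into the cusp translate directly into small values of $Q_0^g$ at integer points.

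Concretely, I would take the cusp family $A_t=B_{\log t}(\infty)$ of Proposition~\ref{prop2}, for which $\mu(A_t)\asymp 1/t$ and which is stable under perturbation by $H$. By Theorem~\ref{main} together with temperedness the critical exponent is $\alpha_{H,A}=d_H=1$, and I would invoke the strong form of this conclusion (Proposition~\ref{p:upper}) to get, for every $\eta<1$ and $\mu$-almost every $x=\G g$, that for \emph{all sufficiently large} $R$ there is $h\in H_R$ with $xh\in A_{R^\eta}$. By Mahler's criterion together with the normalization $\mu(B_s(\infty))\asymp e^{-s}$ (so that $\dist(\Lambda,o)>s$ forces a nonzero lattice vector of length $\ll e^{-s/3}$), the membership $\G gh\in A_{R^\eta}$ means precisely that the lattice $\Z^3 gh$ has a nonzero vector $v$ with $\norm{v}\ll R^{-\eta/3}$.

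It then remains to run the dictionary. Writing $v=ngh$ with $n=v(gh)^{-1}\in\Z^3\setminus\{0\}$, the $H$-invariance of $Q_0$ gives $|Q_0^g(n)|=|Q_0(v)|\ll\norm{v}^2\ll R^{-2\eta/3}$, while $\norm{n}\le\norm{v}\,\norm{(gh)^{-1}}_{\mathrm{op}}\ll_g R^{-\eta/3}\cdot R=R^{1-\eta/3}$, using $\norm{h^{-1}}_{\mathrm{op}}\asymp\norm{h}_{\mathrm{op}}\le R$ for $h\in H_R$. Given a target exponent $\tau<1$, I would set $\eta=\tfrac{3\tau}{2+\tau}\in(0,1)$ and, for each large $T$, choose $R\asymp_g T^{1/(1-\eta/3)}$ so that $\norm{n}<T$; then $|Q_0^g(n)|\ll_g R^{-2\eta/3}=T^{-2\eta/(3-\eta)}=T^{-\tau}$, and since $R\to\infty$ as $T\to\infty$ this produces the required vector for all sufficiently large $T$.

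The genuinely delicate point is securing the statement for \emph{all} large $T$ rather than merely infinitely often: the plain logarithm law (the $\limsup$ in Proposition~\ref{prop2}) only yields deep excursions along some sequence $R_k\to\infty$, and bridging the gaps between consecutive scales is exactly what forces one to use the stronger ``for all sufficiently large $R$'' conclusion of Proposition~\ref{p:upper}. A secondary obstacle is the bookkeeping of the two distinct scales — the Euclidean length bound $T$ on $n$ and the size of the form value — through the $g$-dependent constants $\norm{g^{-1}}_{\mathrm{op}}$, together with the verification that $\norm{h^{-1}}\asymp\norm{h}$ on $H$ so that the pullback $n=v(gh)^{-1}$ has the claimed norm and the passage from the $H$-ball radius $R$ to the arithmetic cutoff $T$ is uniform in $T$.
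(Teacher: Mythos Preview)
Your proposal is correct and follows essentially the same route as the paper: tempered triple $(G,H,\G)$, $d_H=1$, Proposition~\ref{p:upper} with $\eta=\tfrac{3\tau}{2+\tau}$, then the same dictionary $v=ngh\mapsto n$ using $H$-invariance of $Q_0$ and the bound $|Q_0(v)|\le\|v\|^2$. The one cosmetic difference is that instead of routing through the abstract cusp neighborhoods $B_{\log t}(\infty)$ and then invoking Mahler to extract a short vector of length $\ll e^{-s/3}$ (a step whose exact exponent $1/3$ is not immediate from Mahler's criterion and the normalization $\mu(B_s(\infty))\asymp e^{-s}$ alone), the paper takes the shrinking targets directly to be $A_t=\{\Lambda:\alpha_1(\Lambda)\ge t^{1/3}\}$, which have measure $\asymp 1/t$ by \cite{KleinbockMargulis1999} and make the short-vector extraction tautological.
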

\begin{rem}
The threshold of $\tau\leq1$ for the exponent is natural in the sense that there are roughly $T^3$ integer vectors with  $\norm{n}\leq T$ and $Q(n)$ takes values of order $\ll T^2$, so the average spacing between values of $Q$ is of order $T^{-1}$.  \end{rem}

\begin{rem}
Recently, Bourgain \cite{Bourgain16} proved a similar result for almost all diagonal quadratic forms. That is, he showed that  for any $\tau<2/5$, (respectively for any $\tau<1$ assuming the Lindel\"of hypothesis for the Riemann zeta function)  and for almost all $\beta$,  the form  $Q_\beta(x,y,z)=x^2+\alpha y^2-\beta z^2$ satisfies that for all sufficiently large $T$ there is $n\in \Z^2$ with $\norm{n}<T$ and $|Q_\beta(n)|\ll T^{-\tau}$.
%In his paper he also proved another result showing that, as long as $N(T)^3 \delta(T)^{-11/2}\ll T^{1-\epsilon}$ (respectively $N(T)\delta(T)^{-2}\ll T^{1-\epsilon}$ assuming Lindel\"of) , then for almost all $\beta$, 
%\begin{equation}\label{e:uniform}\max_{|\xi|\leq N(T)} \min_{\norm{n}\leq T}|Q_\beta(n)-\xi|\leq \delta(T).\end{equation}
%In forthcoming work we show that, by an adaptation of the methods of this paper, it is possible to also prove \eqref{e:uniform} unconditionally for almost all forms $Q^g,\; g\in X_3$.
\end{rem}

%\noindent The problem of making Margulis' result and other related results on the density of values taking by quadratic forms \emph{effective} has a long history. We mention the work of G\"{o}tze-Margulis \cite{GM}, by Lindenstrauss-Margulis \cite{LM}. Closer in the spirit to the above result are the forthcoming results of Ghosh-Gorodnik-Nevo \cite{GGNq} and Athreya-Margulis \cite{AMq} where very general effective results are proved for generic (i.e. almost every) quadratic form. 
% 
 \section{Proofs}
\noindent In this section we collect all the proofs for the statements made above.

\subsection{Shrinking targets}
First, to show that the critical exponent is indeed $H$ invariant we show 
\begin{lem}\label{l:inv}
Assume that the norm on $H$ satisfies the following regularity condition: For any $h_0,h\in H$ we have $\|h_0h\|\asymp_{h_0} \|h\|$. Then $\alpha(x)=\alpha(xh)$ for any $h\in H$.
\end{lem}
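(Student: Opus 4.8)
The plan is to reduce the claim to the single inequality $\alpha(xh_0)\le\alpha(x)$ for every $h_0\in H$; applying this with $x$ replaced by $xh_0$ and $h_0$ by $h_0^{-1}$ then yields the reverse inequality, and hence equality. Throughout I write $S_\eta(x)=\{h\in H: xh\in A_{\|h\|^\eta}\}$ and record two preliminary facts. First, since $\{A_t\}$ is decreasing and $\|h\|^\eta$ is nondecreasing in $\eta$ on $\{\|h\|\ge 1\}$, the set of exponents $\eta$ for which $S_\eta(x)$ is bounded is upward closed, so whenever $\alpha(x)$ is defined and $\eta>\alpha(x)$ the set $S_\eta(x)$ is bounded. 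Second, the regularity hypothesis applied to $h_0$ and to $h_0^{-1}$ gives a constant $c=c(h_0)>1$ with $c^{-1}\|h\|\le\|h_0^{\pm1}h\|\le c\|h\|$ for all $h$; in particular left translation by $h_0$ is a homeomorphism of $H$ carrying bounded sets to bounded sets.

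Fix $\eta>\alpha(x)$ and choose $\epsilon>0$ with $\eta-\epsilon>\alpha(x)$, so that $S_{\eta-\epsilon}(x)$ is contained in some ball $\{\|h\|\le R_0\}$. To control $S_\eta(xh_0)$ I substitute $h'=h_0h$; by the second fact above, $S_\eta(xh_0)$ is bounded if and only if $\tilde T:=\{h': xh'\in A_{\|h_0^{-1}h'\|^\eta}\}$ is bounded. The heart of the argument is to absorb the multiplicative distortion coming from the norm into an arbitrarily small change of exponent. For $h'\in\tilde T$ the regularity bound $\|h_0^{-1}h'\|\ge c^{-1}\|h'\|$ together with monotonicity of $\{A_t\}$ gives $xh'\in A_{\|h_0^{-1}h'\|^\eta}\subseteq A_{c^{-\eta}\|h'\|^\eta}$. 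Since $c^{-\eta}\|h'\|^\eta/\|h'\|^{\eta-\epsilon}=c^{-\eta}\|h'\|^\epsilon\to\infty$, there is $R_1=R_1(c,\eta,\epsilon)$ such that $c^{-\eta}\|h'\|^\eta\ge\|h'\|^{\eta-\epsilon}$ once $\|h'\|\ge R_1$; monotonicity then yields $xh'\in A_{\|h'\|^{\eta-\epsilon}}$, i.e.\ $h'\in S_{\eta-\epsilon}(x)$. Hence every $h'\in\tilde T$ satisfies either $\|h'\|<R_1$ or $\|h'\|\le R_0$, so $\tilde T\subseteq\{\|h'\|\le\max(R_0,R_1)\}$ is bounded. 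Thus $S_\eta(xh_0)$ is bounded for every $\eta>\alpha(x)$, which shows that $\alpha(xh_0)$ is defined and satisfies $\alpha(xh_0)\le\alpha(x)$; the symmetric substitution completes the proof.

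The step I expect to require the most care is precisely this absorption: the norm regularity only controls $\|h_0^{-1}h'\|$ up to a multiplicative constant $c^{\pm\eta}$, and such a constant cannot be folded directly into the index $t$ of $A_t$. The power-law form of the index is what rescues the argument — a fixed multiplicative factor is dominated by $\|h'\|^{\pm\epsilon}$ for all large $\|h'\|$, and since boundedness of $S_\eta$ is a statement about the large-$\|h\|$ regime, it is harmless to discard the bounded set $\{\|h'\|<R_1\}$. One should also verify the two bookkeeping points flagged above, the upward closure in $\eta$ and the invariance of boundedness under left translation, both of which follow directly from the monotonicity of $\{A_t\}$ and from the regularity hypothesis, respectively.
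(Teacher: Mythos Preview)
Your proof is correct and follows essentially the same route as the paper: substitute $h'=h_0h$, use the norm regularity $\|h_0^{-1}h'\|\ge c^{-1}\|h'\|$ together with monotonicity of $\{A_t\}$ to absorb the multiplicative constant into a slightly smaller exponent, and conclude by symmetry. Your write-up is in fact a bit more careful than the paper's (you explicitly verify the upward closure in $\eta$ and that left translation preserves boundedness), and you state the intermediate inequality in the correct direction $\alpha(xh_0)\le\alpha(x)$, whereas the paper writes $\alpha(x)\le\alpha(xh_0)$ --- a harmless slip since the argument is symmetric.
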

\begin{proof}
Let $x\in X$ such that $\alpha(x)$ is defined and let $h_0\in H$. Note that if $\eta_1>0$ satisfies that 
$$\{h: xh\in A_{\|h\|^{\eta_1}}\}$$
is bounded, and $\eta_2>\eta_1$ then
%Assume by contradiction that $\alpha(x)<\alpha(xh_0)$ for some $h_0$. Then there are $\alpha(x)<\eta_1<\eta_2<\alpha(xh_0)$ such that 
$$\{h: xh_0h\in A_{\|h\|^{\eta_2}}\}$$
is also bounded. Indeed, if not then
$$\{h: xh\in A_{\norm{h_0^{-1}h}^{\eta_2}}\}$$
is unbounded and since $\norm{h_0^{-1}h}\geq c_0\norm{h}$, when $\|h\|$ is sufficiently large
$$\|h_0^{-1}h\|^{\eta_2}\geq \|h\|^{\eta_1},$$
implying that
$$\{h: xh\in A_{\|h\|^{\eta_1}}\},$$
is unbounded, in contradiction to our assumption. 
 
From this we see that $\alpha(xh_0)$ is also defined, and that $\alpha(x)\leq \alpha(xh_0)$. Since the argument is symmetric we conclude that $\alpha(x)=\alpha (xh_0)$.
\end{proof}

Next, to establish the lower bound \eqref{e:BC} we show the following (cf. \cite{Maucourant06})

\begin{lem}\label{l:BC}
Let $A_t$ be a decreasing family of measurable subsets of $X$ satisfying $\mu(A_t H_\delta)\asymp  \frac{1}{t}$ for some fixed $\delta>0$. %Assume that for some $\eta > 1$
%\begin{equation}
%\mu(A_t) \ll m_{H}(H_t)^{-\eta}
%\end{equation}
 Then for all $\eta>d_H^+$ for almost every $x \in X$,
 \begin{equation}
 \{h \in H~:~xh \in A_{\|h\|^\eta}\}
 \end{equation}
 \noindent is bounded.
 \end{lem}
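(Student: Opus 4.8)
The plan is to apply a Borel--Cantelli argument along a geometric sequence of radii. Since we want to show that the set $\{h : xh \in A_{\|h\|^\eta}\}$ is bounded for a.e. $x$, it suffices to show that only finitely many ``dyadic shells'' can contain a point $h$ with $xh \in A_{\|h\|^\eta}$. First I would fix a ratio $\rho>1$ and set $R_k=\rho^k$, decomposing $H$ into annuli $S_k = H_{R_{k+1}}\setminus H_{R_k}$. On the annulus $S_k$ the norm satisfies $R_k \le \|h\| \le R_{k+1}$, so if $xh\in A_{\|h\|^\eta}$ with $h\in S_k$ then, because $A_t$ is \emph{decreasing}, we have $xh\in A_{R_k^\eta}$ (the larger target). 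Thus it is enough to control, for each $k$, the event
$$
E_k = \{x : xh \in A_{R_k^\eta}\text{ for some }h\in S_k\}.
$$

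Next I would estimate $\mu(E_k)$. The key observation is that $E_k$ is contained in the set of $x$ whose $S_k$-translate meets $A_{R_k^\eta}$, which can be bounded by integrating the indicator of $A_{R_k^\eta}$ over the translates. Concretely, using that $S_k \subseteq H_{R_{k+1}}$ and a covering of $H_{R_{k+1}}$ by translates $h_i H_\delta$ of the fixed small ball $H_\delta$ (one needs $O(m(H_{R_{k+1}})/m(H_\delta))$ of them), each contributing at most $\mu(A_{R_k^\eta}H_\delta)$ by invariance of $\mu$ under the $H$-action, I would obtain
$$
\mu(E_k) \ll \frac{m(H_{R_{k+1}})}{m(H_\delta)}\,\mu(A_{R_k^\eta}H_\delta)\ll_\delta m(H_{R_{k+1}})\, R_k^{-\eta},
$$
using the hypothesis $\mu(A_t H_\delta)\asymp 1/t$ with $t=R_k^\eta$. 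Applying the growth bound \eqref{e:dH}, for any $\epsilon>0$ we get $m(H_{R_{k+1}})\ll_\epsilon R_{k+1}^{d_H^++\epsilon}\asymp R_k^{d_H^++\epsilon}$, hence $\mu(E_k)\ll_{\epsilon,\delta} R_k^{d_H^+ +\epsilon-\eta} = \rho^{k(d_H^++\epsilon-\eta)}$.

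Finally, since $\eta>d_H^+$, I can choose $\epsilon>0$ small enough that $d_H^++\epsilon-\eta<0$, making $\sum_k \mu(E_k)$ a convergent geometric series. By the Borel--Cantelli lemma, for almost every $x$ only finitely many of the events $E_k$ occur, which means there is some $K$ such that for all $k\ge K$ no $h\in S_k$ satisfies $xh\in A_{\|h\|^\eta}$. Consequently $\{h : xh\in A_{\|h\|^\eta}\}\subseteq H_{R_K}$ is bounded, as desired.

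The main obstacle I expect is the covering/counting step that produces the factor $m(H_{R_{k+1}})$: one must pass from ``$xh\in A$ for \emph{some} $h$ in the annulus'' to a measure estimate on $x$, and this requires both the $H$-invariance of $\mu$ and a uniform covering of the annulus by translates of $H_\delta$ with a controlled number of pieces. The role of the thickening $A_t H_\delta$ in the hypothesis is precisely to absorb the $\delta$-ball perturbation coming from this covering, so care is needed to align the covering radius with the fixed $\delta$.
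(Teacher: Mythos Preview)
Your proposal is correct, but it takes a different route from the paper's proof, and the difference is worth noting. The paper avoids the dyadic decomposition and the covering/counting step entirely: instead of bounding the probability of the events $E_k$, it introduces the thickened indicator $f_t=\mathbf 1_{A_tH_\delta}$ and considers the single integral
\[
F_\infty(x)=\int_H f_{\|h\|^\eta}(xh)\,dm(h).
\]
The thickening is used to show that whenever the set $\{h:xh\in A_{\|h\|^\eta}\}$ is unbounded, each element $h_k$ of an unbounded sequence in it forces the entire ball $h_kH_\delta$ to lie in $\{h:xh\in A_{\|h\|^\eta}H_\delta\}$, so (after passing to a subsequence with disjoint balls) $F_\infty(x)=\infty$. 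Then Fubini and $\mu(A_tH_\delta)\asymp 1/t$ give $\int_X F_\infty\,d\mu\ll\int_H\|h\|^{-\eta}\,dm(h)<\infty$ for $\eta>d_H^+$, so $F_\infty<\infty$ almost everywhere.

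In comparison, your Borel--Cantelli argument is perfectly sound and perhaps more familiar, but the covering step (bounding the number of $H_\delta$-translates needed to cover $H_{R_{k+1}}$ by $m(H_{R_{k+1}})/m(H_\delta)$) implicitly uses a doubling or packing property of the norm balls that you would need to justify; the paper's integral argument sidesteps this by trading the union bound for Fubini. Both approaches exploit the thickening $A_tH_\delta$, but for dual purposes: you use it to absorb the perturbation from covering the domain, while the paper uses it to guarantee that each ``hit'' contributes a fixed positive $m$-mass.
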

 \begin{proof}
 Let $B_t=A_t H_\delta$ and let $f_t$ denote the indicator function of $B_t$.
 For any $T>0$ and $x\in X$ define the function 
 $$F_T(x)=\int_{H_T}f_{\|h\|^\eta}(xh)dm(h),$$
  and let $F_\infty(x)=\lim_{T\to\infty}F_T(x)\in[0,\infty]$.\\
  
\noindent We show that if the set $\{h \in H~:~xh \in A_{\|h\|^\eta}\}$ is unbounded then $F_\infty(x)=\infty$. Indeed, let $h_k\in H$ be an unbounded sequence with $xh_k\in A_{\|h_k\|^{\eta}}$. Then for any $h\in h_kH_\delta$ we have that 
  $xh\in B_{\|h\|^\eta}$. Perhaps after taking a subsequence we may assume the sets $h_kH_\delta$ are all disjoint and hence 
  the set $\{h\in H: xh\in B_{\|h\|^\eta}\}$ has infinite measure, so $F_\infty(x)=\infty$ as claimed.\\

\noindent  Next, since $\eta>d_H^+$ we have that  
  \begin{eqnarray*}
  \int_{X} F_T(x)d\mu(x)&=&\int_X\int_{H_T}f_{\|h\|^\eta}(xh)dm_H(h)d\mu(x)\\
  &=&\int_{H_T}\mu(B_{\|h\|^\eta})dm(h)\\
  &\leq & \int_{H} \mu(B_{\|h\|^{\eta}})dm(h)\\
  &\ll&  \int_{H} \|h\|^{-\eta}dm(h)  <\infty
  \end{eqnarray*}
  so by dominated convergence for almost all $x\in X$  we have $F_\infty(x)<\infty$, concluding the proof.  
 \end{proof}

% \begin{lem}\label{l:BC}
%Let $A_t$ be a decreasing family of measurable subsets of $X$ satisfying $\mu(A_t)\asymp \frac{1}{t}$. %Assume that for some $\eta > 1$
%%\begin{equation}
%%\mu(A_t) \ll m_{H}(H_t)^{-\eta}
%%\end{equation}
% Then for all $\eta>d_H^+$ for almost every $x \in X$,
% \begin{equation}
% \{h \in H~:~xh \in A_{\|h\|^\eta}\}
% \end{equation}
% \noindent is bounded.
% \end{lem}
% \begin{proof}
% Let $\epsilon$ be sufficiently small so that $\eta>d_H^++2\epsilon$.
% The set of $x \in X$ for which $\{h~:~ xh \in A_{\|h\|^\eta}\}$ is unbounded is
% $$\bigcap_{t > 0}\bigcup_{\|h\| \geq t}A_{\|h\|^\eta}h^{-1}. $$
% \noindent The measure of this set is bounded by 
% $$ \mu\left(\bigcup_{\|h\| \geq t}A_{\|h\|^\eta}h^{-1}\right) \leq \int_{\|h\| \geq t}\mu(A_{\|h\|^\eta})\ll  \int_{\|h\| \geq t}\|h\|^{-\eta} .$$
%\noindent  
%Writing $N(s) = m_{H}(H_s)$ we can bound $N(s) \ll_\epsilon s^{d_H^++\epsilon}$ by \eqref{e:dH} and integrating by parts we get
% $$\int_{\|h\| \geq t}\|h\|^{-\eta} =  \int_{t}^{\infty}s^{-\eta}dN(s) \ll_\epsilon t^{-(\eta - d^+_H-\epsilon)}\ll t^{-\epsilon}.$$
%We thus get that $\mu(\bigcap_{t > 0}\bigcup_{\|h\| \geq t}A_{\|h\|^\eta}h^{-1})\ll t^{-\epsilon}$ for all $t$, and hence of measure zero.
%\end{proof}
% 

Finally, the proof of Theorem \ref{main} follows immediately from the following
\begin{prop}\label{p:upper}
 Assume that  the $H$ action on $X$ satisfies an effective mean ergodic theorem with exponent $\kappa$.
 Let $A_t$ be a decreasing family of measurable subsets of $X$ as above.  Then for all $\eta < 2\kappa d_H^-$
\noindent and for almost every $x \in X$ for all sufficiently large $T$ there is $h\in H_T$ with $xh\in A_{T^\eta}$.
% \begin{equation}
% \{h \in H~:~xh \in A_{\|h\|^\eta}\}
% \end{equation}
% \noindent is unbounded.  
 \end{prop}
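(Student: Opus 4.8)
The plan is to deduce membership in the shrinking target from the \emph{positivity} of the ergodic average $\pi_T$ applied to the indicator of $A_{T^\eta}$, to control the fluctuations of this average via the effective mean ergodic theorem \eqref{eq:met}, and then to run a Borel--Cantelli argument along a geometric sequence of times, upgrading to all $T$ by monotonicity. We may assume $\eta>0$, since for $\eta\le 0$ the set $A_{T^\eta}$ has measure bounded below and the statement is immediate.

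Fix $\eta<2\kappa d_H^-$ and choose $\epsilon>0$ small enough that $\delta:=2\kappa(d_H^--\epsilon)-\eta>0$. Set $T_n=2^n$ and let $f_n$ be the indicator function of $A_{s_n}$ with $s_n=T_{n+1}^\eta$, so that $\|f_n\|_2^2=\mu(A_{s_n})\asymp s_n^{-1}\asymp T_n^{-\eta}$. The key observation is that if $\pi_{T_n}f_n(x)>0$ then $\int_{H_{T_n}}f_n(xh)\,dm(h)>0$, and hence there is some $h\in H_{T_n}$ with $xh\in A_{s_n}$.

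First I would bound the average. Applying \eqref{eq:met} to $f_n$ gives
\[
\|\pi_{T_n}f_n-\mu(A_{s_n})\|_2\ll m(H_{T_n})^{-\kappa}\mu(A_{s_n})^{1/2}.
\]
By Chebyshev's inequality the bad set $E_n:=\{x:\ |\pi_{T_n}f_n(x)-\mu(A_{s_n})|>\tfrac12\mu(A_{s_n})\}$ therefore satisfies
\[
\mu(E_n)\ll\frac{m(H_{T_n})^{-2\kappa}\mu(A_{s_n})}{\mu(A_{s_n})^2}=\frac{m(H_{T_n})^{-2\kappa}}{\mu(A_{s_n})}\ll T_n^{\eta-2\kappa(d_H^--\epsilon)}=2^{-n\delta},
\]
where I use the lower bound in \eqref{e:dH} for $m(H_{T_n})$ and the lower bound $\mu(A_{s_n})\gg s_n^{-1}$. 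Since $\delta>0$ the right-hand side is summable, so by Borel--Cantelli almost every $x$ lies in only finitely many $E_n$; for such $x$ and all $n$ large we have $\pi_{T_n}f_n(x)\ge\tfrac12\mu(A_{s_n})>0$, whence there is $h\in H_{T_n}$ with $xh\in A_{s_n}=A_{T_{n+1}^\eta}$.

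Finally I would pass from the discrete times $T_n$ to all $T$, which I expect to be the only genuinely delicate point. Given $T$ large, pick $n$ with $T_n\le T\le T_{n+1}$. The element $h\in H_{T_n}$ produced above lies in $H_T$ because $H_{T_n}\subseteq H_T$, and since $A_t$ is decreasing and $T^\eta\le T_{n+1}^\eta$ we have $A_{T_{n+1}^\eta}\subseteq A_{T^\eta}$, so $xh\in A_{T^\eta}$. This is precisely why the target is taken at the shifted scale $s_n=T_{n+1}^\eta$ rather than $T_n^\eta$: the coarser target at the larger radius $T_{n+1}^\eta$ survives the interpolation over the whole interval $[T_n,T_{n+1}]$, while the shift costs only a harmless constant factor $2^{-\eta}$ in $\mu(A_{s_n})$. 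This establishes the conclusion for all sufficiently large $T$ and almost every $x$, proving Proposition \ref{p:upper}, and with it Theorem \ref{main}.
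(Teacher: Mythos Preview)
Your proof is correct and follows essentially the same route as the paper's: bound the measure of the ``bad'' set $\{x:\pi_{T}f(x)=0\}$ (the paper does this directly, you via Chebyshev with threshold $\tfrac12\mu(A_{s_n})$, which is the same inequality up to a constant), apply Borel--Cantelli along the geometric sequence $T_n=2^n$, and interpolate to general $T$ using monotonicity of $H_t$ and of $A_t$. The only cosmetic difference is that the paper absorbs the interpolation step by introducing an auxiliary exponent $\alpha\in(\eta,2\kappa d_H^-)$, whereas you shift the target index to $s_n=T_{n+1}^\eta$; both devices serve the same purpose and cost only a harmless constant.
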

 
 \begin{proof}
 Let $\eta <\alpha< 2\kappa d_H^-$ and note that for all sufficiently large $T$, if $k=[T]$ then $xH_T\cap A_{T^\eta}$ contains $xH_k\cap A_{k^\alpha}$.
 It is thus sufficient to show that for almost every $x \in X$ for all sufficiently large $k\in \N$ there is $h\in H_k$ with $xh\in A_{k^\alpha}$.

For any set $B\subseteq X$ let  $\cC_{T,B}=\{x\in X: xH_T\cap B=\emptyset\}$, then the effective mean ergodic theorem implies that 
$$\mu(\cC_{T,B})\leq \frac{1}{\mu(B)m(H_T)^{2\kappa}}.$$
Indeed,  if $f$ is the indicator function of $B$ then $\beta_T f(x)=0$ for all $x\in \cC_{T,B}$ and hence
$$\mu(B)^2\mu(\cC_{T,B})\leq \| \beta_T(f)-\mu(B)\|_2^2\leq \frac{\mu(B)}{m(H_T)^{2\kappa}}.$$

Let $\cC$ be the set of all points such that for all $T>0$ there is an integer $k\geq T$ such that $H_{k}\cap A_{k^\alpha}=\emptyset$.
That is $\cC=\bigcap_{T>0}\bigcup_{k>T}\cC_{k,A_{k^\alpha}}$. Now note that 
$$\bigcup_{k=T}^{2T}\cC_{k,A_{k^\alpha}}=\{x\in X: \exists ~k~\in [T,2T],\; xH_k\cap A_{k^\alpha}=\emptyset\}\subseteq \cC_{T,A_{(2T)^\alpha}},$$
so that 
$$\cC\subseteq \bigcap_{T>0}\bigcup_{k>\log(T)} \cC_{2^k,A_{2^{k\alpha+1}}}.$$
By the first part, we have that 
$$\mu(\cC_{2^k ,A_{2^{k\alpha+1}}})\leq \frac{2^{k\alpha}}{m(H_{2^k})^{2\kappa}}\ll_\epsilon 2^{k(\alpha-2\kappa d_H^-+\epsilon)},$$
and taking $\epsilon=\frac{2\kappa d_H^--\alpha}{2}>0$ implies $\mu(\cC_{2^k,A_{2^{k\alpha+1}}})\ll  2^{-k\epsilon}$ and hence 
$$\mu(C) \leq \mu(\bigcup_{k>\log(T)} \cC_{2^k,A_{2^{k\alpha+1}}})\ll T^{-\epsilon},$$ 
for all $T$, completing the proof.
 \end{proof}

 \begin{rem}
 Note that we only use Lemma \ref{l:inv} to show that the critical exponent is constant almost everywhere, but we do not use it in the proofs of Lemma \ref{l:BC} and Proposition \ref{p:upper}. In particular, when $d_H^-=d_H^+=d_H$ and we have a tempered triple we get that $\alpha_{H,A}=d_H$ is constant almost everywhere even without the regularity assumption used in the proof of Lemma \ref{l:inv}.
  \end{rem}
  
 \subsection{Logarithm laws}
We now establish the connection between the critical exponent and logarithm laws by giving the 
 \begin{proof}[Proof of Proposition \ref{prop2}]
  For the first statement, suppose $\limsup_{t \to \infty} \frac{\beta_{t}(x)}{\log t} < \alpha(x)$. Then, there exist $\eta < \alpha(x)$ such that  for all sufficiently large $t\geq t_0$,
 $$ \beta_{t}(x) < \eta \log(t).$$
Therefore for $t>t_0$ for every $h \in H_t$, $\dist(xh, o) < \eta\log t$. In particular for $\|h\|=t\geq t_0$ we have $xh \notin B_{\eta\log(t)}(\infty)=A_{\|h\|^\eta}$, hence, $\{h~:~ xh \in A_{\|h\|^{\eta}}\}$ is a bounded set. So $\alpha(x) < \eta$ which is a contradiction. The other direction follows by a similar argument.\\

\noindent For the second statement, let $x,y\in X$ with $\alpha(x,y)$ defined and assume they are not in the same $H$ orbit (if $y\in Hx$ then $\beta_t(x,y)=0$ for all sufficiently large $t$ so there is nothing to show).  Assume first that  $\limsup_{t \to \infty} \frac{-\log\beta_{t}(x,y)}{\log t} > \frac{\alpha(x,y)}{d_X}$. Then there exist $\eta>\frac{\alpha(x,y)}{d_0}$ and a sequence $t_k\to\infty$ with %$-\log\beta_{t_k}(x,y)>\eta_2\log(t_k)$, that is, with 
 $\beta_{t_k}(x,y)<t_k^{-\eta}$. This implies that there is a sequence $h_k\in H_{t_k}$ with 
 $\dist(xh_k,y)<t_k^{-\eta}$ hence, for $k$ sufficiently large
 $$xh_k\in B_{t_k^{-\eta}}(y)= A_{t_k^{\eta d_0}}.$$  
 Since $t_k\to\infty$ we get that $xh_k\to y$ and since we assume that $y\not\in Hx$ we must have that $\|h_k\|\to\infty$ as well.  Since $A_{t_k^{\eta d_0}}\subseteq A_{\|h_k\|^{\eta d_0}}$ we get that  $xh_k\in A_{\|h_k\|^{\eta d_0}}$ and hence the set
 $$\{h\in H: xh\in A_{\|h_k\|^{\eta d_0}}\},$$
 is unbounded in contradiction. Again, the other direction is similar.
 \end{proof}
\subsection{Diophantine approximation}
We conclude the paper by showing how Theorem \ref{t:Opp} follows from a special case of Proposition \ref{p:upper}.\\

\noindent Let $H=\SL_2(\R),\; G=\SL_3(\R)$ and $\G=\SL_3(\Z)$. Let $Q_0$ be as in \eqref{e:Q0} and consider the  double spin cover map $\iota:H\to \SO_{Q_0}$ given by
\begin{equation}\label{e:spin}
\iota\begin{pmatrix} a& b\\ c & d\end{pmatrix}=
\begin{pmatrix}\frac{a^2-b^2-c^2+d^2}{2} & ac-bd & \frac{a^2-b^2+c^2-d^2}{2}\\
ab-cs &bc+ad& ab+cd\\
\frac{a^2+b^2-c^2-d^2}{2}& ac+bd& \frac{a^2+b^2+c^2+d^2}{2}
\end{pmatrix}
\end{equation}
This gives an irreducible right action of $H$ on $G$ and hence also on $X_3=\G\bk G$.\\

\noindent We give a norm on $H$ using the spin cover map by defining
\begin{equation}\label{e:norm}
\norm{h}:=\|\iota (h)^{-1}\|_2
\end{equation}
where $\|\cdot\|_2$ denotes the the Hilbert-Schmidt norm on $G$ given by 
$\norm{g}_2^2=\tr(g^tg)$. 
\begin{lem}
With this norm we have that 
$m(H_t)\asymp t,$
so that $d_H^+=d_H^-=1$.
\end{lem}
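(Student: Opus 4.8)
The plan is to reduce everything to the diagonal $A$-part of $h$ via the Cartan (KAK) decomposition, exploiting the fact that $\iota$ sends the maximal compact $\SO(2)\subset\SL_2(\R)$ to genuinely (Euclidean-)orthogonal matrices. Once the norm is seen to depend only on the Cartan parameter, the measure of $H_t$ reduces to a one-dimensional integral against the standard radial Haar density, which is an elementary computation.

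First I would write $h=k_1 a k_2$ with $k_1,k_2\in\SO(2)$ and $a=\operatorname{diag}(e^s,e^{-s})$, $s\ge 0$. The key structural step is to observe that for $k=\begin{pmatrix}\cos\theta&-\sin\theta\\ \sin\theta&\cos\theta\end{pmatrix}$, substitution into \eqref{e:spin} gives
\[
\iota(k)=\begin{pmatrix}\cos 2\theta & \sin 2\theta & 0\\ -\sin 2\theta & \cos 2\theta & 0\\ 0&0&1\end{pmatrix},
\]
a rotation in the $(x,y)$-plane, so $\iota(k)\iota(k)^t=I$. Since $\|\iota(h)^{-1}\|_2^2=\tr\big((\iota(h)\iota(h)^t)^{-1}\big)$ and $\iota(h)=\iota(k_1)\iota(a)\iota(k_2)$, the orthogonality of $\iota(k_1),\iota(k_2)$ together with cyclicity of the trace collapses both compact factors, leaving $\norm{h}^2=\tr\big((\iota(a)\iota(a)^t)^{-1}\big)$, a function of $s$ alone. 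Equivalently, $\norm{h}=\norm{a}$.

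Next I would compute $\iota(a)$ for $a=\operatorname{diag}(\lambda,\lambda^{-1})$ with $\lambda=e^s$ directly from \eqref{e:spin}: it is a symmetric matrix whose only nonzero off-diagonal entries are $\tfrac{\lambda^2-\lambda^{-2}}{2}$ in the $(1,3)$ and $(3,1)$ slots, with diagonal $\big(\tfrac{\lambda^2+\lambda^{-2}}{2},1,\tfrac{\lambda^2+\lambda^{-2}}{2}\big)$. A short calculation then yields
\[
\norm{h}^2=\lambda^4+\lambda^{-4}+1=e^{4s}+e^{-4s}+1,
\]
so $\norm{h}\asymp e^{2s}$ as $s\to\infty$, and $\norm{h}\le t$ is equivalent to $s\le s_t$ with $s_t=\tfrac12\log t+O(1)$. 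Finally, recalling that in Cartan coordinates the Haar measure on $\SL_2(\R)$ is $dm(h)=c\,\sinh(2s)\,ds\,dk_1\,dk_2$, I integrate:
\[
m(H_t)\asymp\int_0^{s_t}\sinh(2s)\,ds\asymp \cosh(2s_t)\asymp e^{2s_t}\asymp t,
\]
which gives $m(H_t)\asymp t$ and hence $d_H^-=d_H^+=1$.

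I expect the crux to be the first step — verifying that $\iota$ carries $\SO(2)$ into Euclidean-orthogonal matrices, i.e.\ that $\iota(\SO(2))$ is exactly the maximal compact $\SO(2)$ of $\SO_{Q_0}=\SO(2,1)$ acting on the positive-definite $(x,y)$-plane. This is precisely what decouples the two compact factors and turns $\norm{h}$ into an honest radial function of the Cartan parameter; without it the norm would depend on all three KAK variables. Everything downstream (the explicit diagonal norm $\lambda^4+\lambda^{-4}+1$ and the standard $\sinh(2s)$ Haar density) is then routine.
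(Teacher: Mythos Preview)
Your argument is correct and follows the same overall strategy as the paper (Cartan decomposition, identify the radial behavior of the norm, integrate against the $\sinh$-density). The difference is in execution: the paper carries out a brute-force computation of $\|k_\theta a_t k_{\theta'}\|^2$ and records a formula that still carries a $\theta$-factor, namely $1+2(1+2\sinh^2 t)(1-\sin^2\theta\cos^2\theta)$, and then notes that this is $\asymp e^t$. You instead observe structurally that $\iota(\SO(2))$ lands in the Euclidean orthogonal group, so the Hilbert--Schmidt norm is \emph{exactly} bi-$K$-invariant and reduces to the diagonal value $\lambda^4+\lambda^{-4}+1$. This is cleaner and in fact shows the paper's $\theta$-dependence is spurious (that factor should equal $1$; note it lies in $[3/4,1]$, so the paper's asymptotic conclusion is unaffected).

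One cosmetic slip: you write $\iota(h)=\iota(k_1)\iota(a)\iota(k_2)$, but the map $\iota$ defined by \eqref{e:spin} is an \emph{anti}-homomorphism, so the factors appear in the reverse order. Your argument is insensitive to this, since orthogonality of both $\iota(k_i)$ together with cyclicity of the trace kills the compact factors on either side regardless of their order; just adjust the sentence accordingly.
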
 
\begin{proof}
Consider the $KA^+K$ decomposition of $H$ with $K=\SO(2)$ and 
$$A^+=\left\{a_t=\begin{pmatrix} e^{t/2} & 0\\ 0 & e^{-t/2}\end{pmatrix}: t\geq 0\right\}.$$
Let $k_\theta=\left(\begin{smallmatrix} \cos(\theta) & \sin(\theta)\\ -\sin(\theta)& \cos(\theta)\end{smallmatrix}\right)$ parametrize $K$.
In the coordinates $h=k_\theta a_tk_{\theta'}$  the Haar measure of $H$ is given by
$$dm(h)=\sinh(t)d\theta d\theta'dt,$$
Moreover, a direct computation using \eqref{e:spin} shows that for $h=k_\theta a_tk_{\theta'}$ we have
$$\norm{h}^2=1+2(1+2\sinh^2(t))(1-\sin^2(\theta)\cos^2(\theta)).$$
In particular, for large $t\gg1$ we have that $\|h\|\asymp e^t$ and hence $m(H_t)\asymp t$ as claimed.
\end{proof}

After setting up the norm balls,  $H_t$, we need to define our shrinking targets. To do this, given a lattice $\Lambda\in \R^3$ let 
$$\alpha_1(\Lambda)=\sup_{v\in\Lambda\setminus \{0\}}\frac{1}{\norm{v}}.$$
We recall (see e.g. \cite[Section 7]{KleinbockMargulis1999} that 
$$\mu\{\Lambda\in X_3: \alpha_1(\Lambda)\geq t\}\asymp t^3,$$
and we can thus take our shrinking targets to be 
\begin{equation}\label{e:At}
A_t=\{\Lambda\in X_3: \alpha_1(\Lambda)\geq t^{1/3}\}.
\end{equation}
With these preliminaries in place we can give the 

\begin{proof}[Proof of Theorem \ref{t:Opp}.]
Let $H, G$ and $\G$ be as above, and note that the action of $H$ on $\G\bk G$ satisfies a quantitive mean ergodic theorem with any exponent $\kappa<1/2$.  Hence, by Proposition \ref{p:upper}, for any $\eta<1$, for almost all $x\in X_3$ for all $T\geq T_0(x)$ there is $h\in H$ with $\|h\|\leq T$ and  $x\iota(h)\in A_{T^\eta}$. In particular this holds for $\eta=\frac{3\tau}{\tau+2}$ for any positive $\tau<1$.\\

\noindent Let $x=\G g$ be such a point, and let $Q=Q_0^g$ denote the corresponding quadratic form. For $T\geq T_0(x)$,  let $h\in H$ with $\|h\|\leq T$ and  $x\iota(h)\in A_{T^\eta}$. Let $\Lambda=\Z^3g\iota(h)$ and let $v\in \Lambda$ be a vector of shortest length. Then, since $x\iota(h)\in A_{T^\eta}$, we have that $\norm{v}_2=\frac{1}{\alpha_1(\Lambda)}\leq \frac{1}{\norm{h}^{\eta/3}}$. 
We can write $v=ng\iota(h)$ for some $n\in \Z^3$ then, on one hand, 
$$\norm{n}=\norm{v\iota(h^{-1})g^{-1}}\leq \norm{v}\norm{\iota(h^{-1})}_2\norm{g}_2\leq \|g\|_2 T^{\frac{3-\eta}{3}}.$$
On the other hand we have 
$|Q(n)|=|Q_0(v)|\leq \norm{v}^2\leq T^{-2\eta/3} $. So, setting $\tilde{T}=\|g\|_2 T^{\frac{3-\eta}{3}}$ we get that for all $\tilde{T}$ sufficiently large there is $n\in \Z^3$ with $\norm{n}\leq \tilde{T}$ and $|Q(n)|\ll_g \tilde{T}^{-\tau}$.

%\noindent Let $x=\G g$ be such a point, and let $Q=Q_0^g$ denote the corresponding quadratic form. We may assume that $Q(n)\neq 0$ for $n\in \Z^3\setminus\{0\}$, otherwise the statement holds trivially.
%We now have a sequence $h_k$ with $\norm{h_k}=t_k\to \infty$ with $x\iota(h_k)\in A_{t_k^\eta}$. Let $\Lambda_k=\Z^3 g\iota(h_k)$ and let $v_k\in \Lambda_k$ be a vector of shortest length. Then $\alpha_1(\Lambda_k)=\frac{1}{\norm{v_k}_2}\geq \norm{h_k}^{\eta/3}$ so that 
%\begin{equation}\label{e:vk}
%\norm{v_k}_2\leq \norm{h_k}^{-\eta/3}.
%\end{equation}
%
%\noindent Since $v_k\in \Z^3g\iota(h_k)$ we can write $v_k=n_kg\iota(h_k)$ with $n_k\in \Z^3$, and hence
%$$Q(n_k)=Q_0(n_kg)=Q_0(n_kg\iota(h_k))=Q_0(v_k),$$
%where we used that $\iota(h_k)\in \SO_{Q_0}$. Now, on one hand we have that 
%$$|Q_0(v_k)|\leq \norm{v_k}_2^2\leq \norm{h_k}^{-2\eta/3},$$
%and on the other hand, using \eqref{e:vk} and \eqref{e:norm} we have
%$$\norm{n_k}_2=\norm{v_k\iota(h)^{-1}g^{-1}}_2\leq \norm{v_k}_2\norm{\iota(h)^{-1}}_2\norm{g^{-1}}_2\ll_g\norm{h}^{1-\eta/3}.$$
%hence
%$$\norm{h_k}\gg_g \norm{n_k}_2^{\frac{3}{3-\eta}}.$$
%Combining the two estimates,  we see that 
%$$|Q(n_k)|\leq  \norm{h_k}^{-2\eta/3} \ll_g 1/\norm{n_k}_2^{\tau}.$$
%In particular, since $Q(n_k)\to 0$ we can conclude that $\|n_k\|_2\to \infty$ so those are indeed infinity many points in $\Z^3$.
\end{proof}

 \textbf{Acknowledgements} AG thanks Jayadev Athreya for helpful discussions.\\

\end{document}